\documentclass[10pt,a4paper, oneside]{article}
\usepackage{amsfonts}

\usepackage{amssymb,amsthm, amsmath,graphicx,bm,ebezier,amsthm,mathtools}
\usepackage{color,enumerate}
\usepackage{url,lineno}

\usepackage[latin1]{inputenc}

\vfuzz10pt \hfuzz10pt

\newtheorem{theorem}{Theorem}
\newtheorem{definition}[theorem]{Definition}
\newtheorem{proposition}[theorem]{Proposition}
\newtheorem{corollary}[theorem]{Corollary}
\newtheorem{lemma}[theorem]{Lemma}

\theoremstyle{remark}

\newtheorem{example}[theorem]{Example}
\newtheorem{algorithm}[theorem]{Algorithm}
\newtheorem{remark}[theorem]{Remark}

\newcommand{\gh}{\widehat{\gamma}}

\def\ow{{\overline{\omega}}}
\def\w{{\omega}}
\def\gcd{\mathrm{gcd}}
\def\lcm{\mathrm{lcm}}
\def\A{\mathcal{A}}

\def\minimals{\mathrm{Minimals}_\leq\,}

\def\E{\mathrm{E}}

\def\N{\mathbb{N}}
\def\R{\mathbb{R}}
\def\Z{\mathbb{Z}}
\def\Q{\mathbb{Q}}

\def\int{\mathrm{int}}

\renewcommand{\parallel}{\|}

\title{Computation of  the $\w$-primality and  asymptotic $\w$-primality with applications to numerical semigroups}

\author{J. I. Garc\'{\i}a-Garc\'{\i}a\footnote{Departamento de Matem\'aticas, Universidad de C\'adiz,
E-11510 Puerto Real (C\'{a}diz, Spain). E-mail: ignacio.garcia@uca.es. Partially supported by the grant MTM2010-15595 and Junta de Andaluc\'{\i}a group FQM-366.}\\
M.A. Moreno-Fr\'{\i}as\footnote{Departamento de Matem\'aticas, Universidad de C\'adiz,
E-11510 Puerto Real (C\'{a}diz, Spain). E-mail: mariangeles.moreno@uca.es. Partially supported by MTM2008-06201-C02-02 and Junta de Andaluc\'{\i}a group FQM-298.}\\
A. Vigneron-Tenorio\footnote{Departamento de Matem\'aticas, Universidad de C\'adiz,
E-11405 Jerez de la Frontera (C\'{a}diz, Spain). E-mail: alberto.vigneron@uca.es. Partially supported by the grant MTM2007-64704 (with the help of FEDER Program), MTM2012-36917-C03-01 and Junta de Andaluc\'{\i}a group FQM-366.}\\
}

\sloppy
\date{}

\begin{document}

\maketitle

\begin{abstract}
We give an algorithm to compute the $\omega$-primality of finitely generated atomic monoids. Asymptotic $\w$-primality is also studied and a formula to obtain it  in finitely generated quasi-Archimedean monoids is proven. The formulation is applied to numerical semigroups, obtaining an expression of this invariant in terms of its system of generators.

\smallskip
{\small \emph{Keywords:} Asymptotic $\omega$-primality, commutative monoid, finitely generated monoid, numerical semigroup, quasi-Archimedean monoid, $\omega$-primality.}

\smallskip
{\small \emph{MSC-class:} 20M14,  13A05 (Primaries), 13F15, 20M05 (Secondaries).}
\end{abstract}

\section*{Introduction}
All semigroups appearing in this paper are commutative. For this reason, in the sequel we will omit this
adjective. Here, an atomic monoid means a commutative cancellative semigroup with identity element such that every non-unit may be expressed as a sum of finitely many atoms (irreducible elements).

Problems involving non-unique factorizations in atomic monoids and integral domains have gathered much recent attention in the mathematical literature (see for instance
\cite{Geroldinger-Halter} and the references therein).
Let $S$ be a monoid, the $\omega$-invariant, introduced in \cite{Geroldinger}, is a  well-established invariant in the theory of non-unique factorizations, and  appears also in the context of direct-sum decompositions of modules \cite{Diracca}. This invariant essentially measures how far an element of an integral domain or a monoid is from being prime (see \cite{Anderson-Chapman}).
In \cite{Garcia-Ojeda-Navarro} it is proven that the tame degree and $\omega$-primality coincide for half-factorial affine semigroups and in  \cite{Blanco-Sanchez-Geroldinger} and \cite{Garcia-Ojeda-Navarro} the $\w$-primality is computed for some kinds of affine semigroups (when the semigroup is the intersection of a group and $\Z^p$). Associated with the $\w$-primality there is its asymptotic version, the asymptotic
$\omega$-primality or $\ow$-primality, which is  object of study in several works. In \cite{Chapman},  the $\ow$-primality is  studied for numerical semigroups generated by two elements and it is  given a formula for its computation, but no other studies provide methods to calculate this invariant in other types of monoids.

Our study uses that every finitely generated commutative monoid is finitely presented (see \cite{Redei}). Thus, every monoid can be completely described in terms of a finite set of relations and in case it is cancellative, the monoid is described from a subgroup of $\Z^p$. The first goal of this work is to give an algorithm to compute from a presentation of a finitely generated atomic monoid
the $\omega$-primality of any of its elements. Our second goal is for finitely generated quasi-Archimedean cancellative monoids (note that every numerical semigroup belongs to this class of monoids). For them we give  an explicit formulation of the asymptotic $\w$-primality of its elements, and  a method to compute their asymptotic $\omega$-primalities.

All the theoretical results of this work are complemented with the software {\tt OmegaPrimality} developed in {\tt Mathematica} (see \cite{programa_w-primality}).
This software provides functions to compute the $\omega$-primality of a monoid and its elements from one of its presentations or from a system of generators.

The contents of this paper are organized as follows.
In Section \ref{sec:preli}, we provide some basic tools and definitions that are used in the rest of the work. In Section \ref{sec:w}, we recall the definitions of atomic monoid and   $\w$-primality  and
we give an algorithm to compute  the $\w$-primality of an element.
In Section \ref{sec:2gen}, we start by studying the $\w$-primality and the $\ow$-primality in atomic monoids minimally generated by two elements. Finally, Section \ref{sec:archi} is devoted to give a formulation of the $\ow$-primality in quasi-Archimedean atomic monoids. We finish obtaining the formulation of this invariant for numerical semigroups.

\section{Preliminaries}\label{sec:preli}

Every finitely generated monoid $S$ is isomorphic to a quotient of the form $\N^p/\sigma$ with $\sigma$ a congruence on $\N^p$.  Thus, if $a\in S$, it can be written as $a=[\gamma]_\sigma$ with $\gamma\in\N^p$, where $[\gamma]_\sigma$ denotes the class of equivalence of $\gamma$.
We denote as $\{e_1,\dots ,e_p\}$ the minimal generating set of the monoid $\N ^p$ and
denote by $\parallel(\delta_1,\dots,\delta_p)\parallel=\sum_{i=1}^p\delta_i$
the length of $(\delta_1,\dots,\delta_p)\in\N^p$.
Given $\lambda=(\lambda_1,\dots,\lambda_p),\mu=(\mu_1,\dots,\mu_p)\in\Z^p$, we define the usual cartesian product order $\leq$ on $\Z^p$ as $\lambda\leq \mu$ if and only if
$\mu-\lambda\in\N^p$.
If $A\subseteq\N^p$, define $\minimals A$ as the set of the minimal elements with respect to the order $\leq$. Given two elements $\lambda,\mu\in\N^p$, define $\lambda\vee\mu=(\max\{\lambda_1,\mu_1\},\dots,\max\{\lambda_p,\mu_p\}).$ For any $L$ subset of $\R^p$, denote by $L_>$ the set $\{(x_1,\dots ,x_p)\in L|x_i>0,\,  i=1,\dots,p \}$ and by $L_{\ge}$ the set $\{(x_1,\dots ,x_p)\in L|x_i\ge 0,\,  i=1,\dots,p \}$.

Let $a,b\in S$. We say that $a$ divides $b$ when there exists $c\in S$ such that $a+c=b$, we denote it by $a|b$. The elements $a,b\in S$ are associated if $a|b$ and $b|a$.
When an element $a\in S$ verifies that there exists $b\in S$ such that $a+b=0$, then it is called a unit;
the set of units of $S$ is denoted by $S^\times$. An element $x\in S$ is an atom if it fulfills that $x\not \in S^\times$ and if $a|x$, then either $a\in S^\times$ or $a$ and $x$ are associated.
If the semigroup $S\setminus S^\times$ is
generated by its set of atoms $\A(S)$, the monoid $S$ is called an atomic monoid. It is known that every non-group finitely generated cancellative monoid is atomic (see  \cite[Corollary 16]{atomicos}).

A subset $I$ of a monoid $S$ is an ideal if  $I+S\subseteq I.$ It is straightforward to prove that for every $a\in S$ the set $a+S=\{a+c\,|\,c\in S\}=\{s\in S\,|\,a\textrm{ divides }s\}$ is an ideal of $S$.

\section{Computing the $\omega$-primality in atomic monoids}\label{sec:w}

In this section we show an algorithm to compute the $\w$-primality of an element in a finitely generated atomic monoid from one of its presentations.
The knowledge of the computation of $\w(a)$ for every atom $a$ can be directly used to obtain $\w(S)$ (see \cite[Definition 1.1]{Anderson-Chapman}).
There exist algorithms to compute $\w$ in some kinds of monoids,
for instance in numerical monoids (see \cite{Chapman} and \cite[Remark 5.9.1]{Blanco-Sanchez-Geroldinger}), in half-factorial affine semigroups (see \cite{Garcia-Ojeda-Navarro}), in saturated affine semigroups (see \cite[Corollary 3.5]{Blanco-Sanchez-Geroldinger}), but there is not a general method for its computation in more general situations.

\begin{definition} (See Definition 1.1 in \cite{Chapman}.)
Let $S$ be an atomic monoid with set of units $S^\times$  and set of irreducibles $\A(S)$.
For $s\in S\setminus S^\times$, we define $\omega(x)=n$ if $n$ is the smallest positive integer with the property that whenever $x|a_1+\dots+a_t$, where each $a_i\in\A(S)$, there is a $T\subseteq\{1,2,\dots,t\}$ with $|T|\leq n$ such that $x|\sum_{k\in T}a_k$. If no such $n$ exists, then $\omega(s)=\infty$. For $x\in S^\times$, we define $\omega(x)=0$.
\end{definition}

Recall that every finitely generated monoid $S$ is isomorphic to $\N^p/\sigma$, for some congruence $\sigma$ on $\N^p$ and some positive integer $p$. For sake of simplicity, we will identify $S$ with $\N^p/\sigma$.
Denote by $\varphi:\N^p\to \N^p/\sigma$  the projection map.
For every $A\subset \N^p/\sigma$ denote by $\E(A)$ the set $\varphi^{-1}(A)$. Note that for every $a\in S$ the set $\E(a+S)$ is an ideal of $\N^p$.

The following result is proven in \cite[Proposition 3.3]{Blanco-Sanchez-Geroldinger}
for atomic monoids.

\begin{proposition}\label{p:wminimales}
Let $S=\N^p/\sigma$ be a finitely generated atomic monoid and $a\in S$.
Then
$\omega(a)$ is equal to
$\max \{ \parallel\delta\parallel : \delta\in \minimals (\E( a+S ))\}
.$
\end{proposition}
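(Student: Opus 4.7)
The plan is to prove the equality by a two-sided argument, translating the divisibility condition defining $\omega(a)$ into a condition about the ideal $\E(a+S)\subseteq\N^p$, whose minimal elements (finite by Dickson's Lemma) control both inequalities. Throughout I assume the presentation is chosen so that each generator $\varphi(e_i)$ is an atom of $S$; this is harmless because the atoms of a finitely generated atomic monoid coincide (up to associates) with the images of the minimal generating set of $\N^p$, and the $\omega$-invariant only depends on $S$.

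For the inequality $\omega(a)\le M:=\max\{\parallel\delta\parallel:\delta\in\minimals\E(a+S)\}$, I would take an arbitrary factorization $a\mid a_1+\cdots+a_t$ with each $a_i\in\A(S)$, write $a_i=\varphi(e_{j_i})$, and set $\gamma=e_{j_1}+\cdots+e_{j_t}\in\N^p$. By construction $\varphi(\gamma)=a_1+\cdots+a_t\in a+S$, so $\gamma\in\E(a+S)$. Since $\E(a+S)$ is an ideal of $\N^p$, there exists $\delta\in\minimals\E(a+S)$ with $\delta\le\gamma$. Writing $\gamma=\sum_{k=1}^p\gamma_k e_k$ and $\delta=\sum_{k=1}^p\delta_k e_k$, the inequality $\delta_k\le\gamma_k$ lets me choose, for each coordinate $k$, a subset of size $\delta_k$ among the $\gamma_k$ indices $i$ with $j_i=k$; collecting these indices gives $T\subseteq\{1,\ldots,t\}$ with $|T|=\parallel\delta\parallel\le M$ and $\sum_{k\in T}a_k=\varphi(\delta)\in a+S$, i.e.\ $a\mid\sum_{k\in T}a_k$.

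For the reverse inequality I would exhibit a factorization witnessing $\omega(a)\ge M$. Pick $\delta\in\minimals\E(a+S)$ with $\parallel\delta\parallel=M$, and expand $\varphi(\delta)=\sum_{i=1}^p\delta_i\varphi(e_i)$ as a sum of $M$ atoms $a_1+\cdots+a_M$. Clearly $a\mid a_1+\cdots+a_M$. Now suppose $T\subsetneq\{1,\ldots,M\}$ satisfies $a\mid\sum_{k\in T}a_k$: letting $\delta'\in\N^p$ be the corresponding partial sum of $e_k$'s, we have $\delta'\in\E(a+S)$ and $\delta'<\delta$, contradicting the minimality of $\delta$. Hence every $T$ with $a\mid\sum_{k\in T}a_k$ satisfies $|T|=M$, forcing $\omega(a)\ge M$.

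The main obstacle, and the point where I would be most careful, is the bookkeeping in the forward direction: relating the coordinatewise inequality $\delta\le\gamma$ in $\N^p$ to the combinatorial selection of a subset $T$ of the index set $\{1,\ldots,t\}$, rather than merely a multisubset of the atoms. Once this correspondence between elements of $\N^p$ of norm $n$ and ordered atomic sums of length $n$ is made precise, both inequalities reduce to the ideal-theoretic description of divisibility and the minimality properties in $\N^p$, with Dickson's Lemma guaranteeing finiteness of the set over which the maximum is taken.
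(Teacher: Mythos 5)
Your argument is correct. Note, however, that the paper offers no proof of this proposition to compare against: it simply cites \cite[Proposition 3.3]{Blanco-Sanchez-Geroldinger}. What you have written is essentially the standard argument underlying that reference: lift a product of atoms $a_1+\dots+a_t$ dividing a multiple of $a$ to $\gamma=e_{j_1}+\dots+e_{j_t}\in\E(a+S)$, use Dickson's Lemma to find $\delta\in\minimals(\E(a+S))$ with $\delta\leq\gamma$ and turn the coordinatewise inequality into a subset $T$ with $|T|=\parallel\delta\parallel$, and conversely observe that a minimal $\delta$ of maximal length yields a factorization admitting no proper divisible subsum. Both directions are sound, including the bookkeeping you single out (identifying subsets of $\{1,\dots,t\}$ whose associated sub-sum of $e_{j_i}$'s equals $\delta$), and the finiteness of the maximum is guaranteed by Dickson's Lemma as you say. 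The one hypothesis you smuggle in --- that the atoms of $S$ are exactly the images $\varphi(e_i)$ of the generators of $\N^p$, so that every atomic factorization lifts coordinatewise --- is genuinely needed and is only automatic for reduced monoids with a minimal presentation; you flag it explicitly, which is the right level of care given that the paper itself glosses over the same point.
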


The set $a+S$ collects all the multiples of $a$, and therefore in many cases it is not a finite set.
The problem now is to compute its minimal elements with respect $\leq$ the usual cartesian product order on $\N^p$.
A solution is given by Algorithm 16 of \cite{irreducibles} which computes the set
$\minimals \E(I)$ for every ideal $I$ of $S$.
The following algorithm  computes the $\w$-primality of an element of an atomic monoid.

\begin{algorithm}\label{algoritmo:w}
Input: A finite presentation of $S=\N^p/\sigma$ and $\gamma$ an element of $\N^p$ verifying that $a=[\gamma]_\sigma$.\\
Output: $\omega(a)$.
\begin{enumerate}[(Step 1)]
\item
Compute the set $\Delta=\minimals ( \E([\gamma]_{\sigma}+S) )$ using \cite[Algorithm 16]{irreducibles}.
\item Set $\Psi=\{\parallel\mu\parallel:\mu\in\Delta\}$.
\item Return $\max \Psi$.
\end{enumerate}
\end{algorithm}

We illustrate now Algorithm \ref{algoritmo:w} with some examples (all the computations were done in an Intel Core i7 with 16 GB of main memory). Note that the inputs in each example are the minimal generator set of the semigroup and an element of the semigroup. The presentations and the expressions of the elements in terms of the atoms are computed internally by the program.

\begin{example}
Let $S$ be the affine semigroup generated by $\{(5,3),(5,11),(2,7),(11,4)\}$.  To compute the $\w$-primality we use the package {\tt OmegaPrimality}
developed for this work, which is available in \cite{programa_w-primality}.
For the element $(154,118)$ we obtained the  output
\begin{verbatim}
In[1]:= OmegaPrimalityOfElemAffSG[{154,118},
                                {{5,3}, {5,11}, {2,7}, {11,4}}]
The expression of the element is {3,5,2,10}
Length of output of Alg16=40
Out[1]= 68
\end{verbatim}
In this case, an expression of $(154,118)$ is $3(5,3)+5(5,11)+2(2,7)+10(11,4)$, the size of $\minimals (\E([\gamma]_{\sigma}+S))$ (the output of \cite[Algorithm 16]{irreducibles}) is equal to 40 and the $\w$-primality of the element is $68$.
\end{example}

In order to compare the timings obtained using Algorithm \ref{algoritmo:w} and the {\tt numericalsgps GAP} package \cite{gapns}, which uses the method described in \cite{Blanco-Sanchez-Geroldinger}, we consider the following examples.  Note that the implementation of $\omega$-primality in the {\tt numericalsgps} relies on the construction of Ap\'{e}ry set while our implementation is based on Gr\"{o}bner basis calculations.
In any case, the procedures in the {\tt numericalsgps} package only apply to numerical semigroups, while the algorithm presented here applies also for any affine semigroup.

\begin{example}\label{ex5}
Let $S$ be the numerical semigroup generated by $\{115,212,333,571\}$. We use again the package {\tt OmegaPrimality} obtaining for the element $10000$ the output
\begin{verbatim}
In[1]:= OmegaPrimalityOfElemAffSG[{10000},
                                {{115}, {212}, {333}, {571}}]
The expression of the element is {3,2,2,15}
Length of output of Alg16=203
Out[1]= 109
\end{verbatim}
The timing
was approximately $22$ seconds and the value obtained for $\w(10000)$ was equal to $109$.
Using the function {\tt OmegaPrimalityOfElementOfNumericalSemigroup} of {\tt numericalsgps GAP} package we obtained
\begin{verbatim}
gap> OmegaPrimalityOfElementInNumericalSemigroup(10000,S);
109
\end{verbatim}
with a timing of approximately $1389$ seconds (and the same value for $\w(10000)$).
\end{example}

\begin{example}
The $\omega$-primality of an atomic monoid $S$ is defined as
$\omega(S)=\sup\{\omega(x)\,|\,x\textrm{ is irreducible}\}$.
Hence,  Algorithm \ref{algoritmo:w} can be used to compute $\w(S)$ when $S$ is atomic and finitely generated just computing the $\omega$-primality of its generators and taking the maximum.
This is implemented in function {\tt OmegaPrimalityOfAffSG}:
\begin{verbatim}
In[2]:= OmegaPrimalityOfAffSG[{{115}, {212}, {333}, {571}}]
...
w-primalities of the generators: {15,36,36,36}
Out[2]= 36
\end{verbatim}
In this case, the $\w$-primalities of the generators are $15$, $36$, $36$ and $36$, respectively, and this computation took 496 milliseconds. Via the {\tt numericalsgps} functions
\begin{verbatim}
gap> OmegaPrimalityOfNumericalSemigroup(S);
36
\end{verbatim}
it took 1888 milliseconds.
\end{example}

\begin{example}
Consider the numerical semigroup $S$ minimally generated by $\{10,\dots ,19\}$.
The package {\tt OmegaPrimality} needed approximately 3779 milliseconds for computing $\omega(S),$
\begin{verbatim}
In[1]:= OmegaPrimalityOfAffSG[{{10},{11},{12},{13},
                                {14},{15},{16},{17},{18},{19}}]
...
w-primalities of the generators: {2,3,3,3,3,3,3,3,3,3}
Out[1]= 3
\end{verbatim}
while the {\tt numericalsgps}
functions took 125 milliseconds.
\end{example}

\begin{example}
Consider now $S=\langle 101, 111, 121, 131, 141, 151, 161, 171, 181, 191\rangle$, its $\omega-$primality was computed in approximately 135081 milliseconds,
\begin{verbatim}
In[1]:= OmegaPrimalityOfAffSG[{{101}, {111}, {121}, {131},
                    {141}, {151}, {161}, {171}, {181}, {191}}]
...
w-primalities of the generators: {12,23,22,22,22,22,22,22,22,22}
Out[1]= 23
\end{verbatim}
and the {\tt numericalsgps} functions took just 383949 milliseconds.
\end{example}

After some comparisons between the times required to obtain the $\omega$-primality using our implementation and {\tt numericalsgps}, we can conclude that the larger are the elements or generators, the better performance one gets with our procedure.  But, if there are many generators and {\em small}, then one should use the Ap\'{e}ry method.

\section{Asymptotic $\omega$-primality in monoids generated by two elements}\label{sec:2gen}

Let $S$ be an atomic monoid and $x\in S$,  define $\overline{\omega}(x)= \lim_{n \to + \infty} \frac{\omega(nx)}{n}$ the asymptotic $\omega$-primality of $x$. Asymptotic $\omega$-primality of $S$ is defined as $\overline\omega(S)=\sup\{\overline\omega(x)|x\textrm{ is irreducible}\}$ (see \cite{Anderson-Chapman}). As in the preceding section  if $S=\langle s_1,\dots,s_p\rangle$, then $\overline\omega(S)$ is equal to $\max\{\ow(s_i)|i=1,\dots,p\}$.

In this section we focus our attention on cancellative reduced monoids minimally generated by two elements.
These monoids are all isomorphic to monoids of the form $\N^2/\sigma$, and they are atomic (see \cite{atomicos}). The following result shows how their congruences are. Note that in particular all numerical monoids generated by two elements are monoids of this type.

\begin{lemma}
A non-free monoid $S$ is cancellative, reduced and minimally generated by two elements if and only if $S\cong\N^2/\sigma$ with $\sigma=\langle((\alpha,0),(0,\beta))\rangle$ and $\alpha,\beta>1$.
\end{lemma}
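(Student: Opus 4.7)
The plan is to use the correspondence between congruences $\sigma$ on $\N^{2}$ producing a cancellative quotient and subgroups $L\subseteq\Z^{2}$, under which $(x,y)\,\sigma\,(x',y')$ iff $(x,y)-(x',y')\in L$. With this dictionary in place, each direction becomes a short classification.

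For the necessity ($\Rightarrow$), since $S$ is cancellative, finitely generated and minimally generated by two elements, I would write $S\cong\N^{2}/\sigma$ and let $L$ be the corresponding subgroup of $\Z^{2}$, then split by $\rank L$. Rank $0$ gives $S\cong\N^{2}$, contradicting non-freeness. Rank $2$ makes $\Z^{2}/L$ finite; since $S$ embeds in it, $S$ is a finite cancellative commutative monoid, hence a group, and reducedness forces $S=\{0\}$, incompatible with having two minimal generators. So $\rank L=1$ and $L=\Z v$ for a primitive $v\in\Z^{2}$. Any nonzero $w\in L\cap\N^{2}$ would give $[w]_{\sigma}=0$, promoting $[e_{1}]$ or $[e_{2}]$ to a unit (hence to $0$ by reducedness) and contradicting minimality; therefore $L\cap\N^{2}=\{0\}$, so $v$ lies in no closed quadrant and, after a possible sign flip, can be written $v=(\alpha,-\beta)$ with $\alpha,\beta>0$. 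This makes the pair $((\alpha,0),(0,\beta))$ generate $\sigma$, and $\alpha=1$ would force $[e_{1}]=\beta[e_{2}]$, again violating minimality, so $\alpha,\beta>1$.

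For the sufficiency ($\Leftarrow$), starting from $\sigma=\langle((\alpha,0),(0,\beta))\rangle$ with $\alpha,\beta>1$, I would first show that $(x,y)\,\sigma\,(x',y')$ is equivalent to $(x,y)-(x',y')\in\Z(\alpha,-\beta)$. The direction $\Rightarrow$ is immediate because every elementary substitution alters the vector by $\pm(\alpha,-\beta)$. The converse is the delicate part: given a difference $k(\alpha,-\beta)$, one must produce an explicit chain of rewrites inside $\N^{2}$, which I would do by induction on $|k|$, observing that the coordinate being decremented remains non-negative at every step because the target coordinate is non-negative and the total drop is $|k|\beta$. Cancellativity then follows from the equivalence; reducedness from $\Z(\alpha,-\beta)\cap\N^{2}=\{0\}$; non-freeness from the single non-trivial relation; and minimality of $\{[e_{1}],[e_{2}]\}$ from the impossibility of $(1,-k)\in\Z(\alpha,-\beta)$ when $\alpha>1$.

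The principal obstacle is precisely the ``staying inside $\N^{2}$'' step in the sufficiency direction---translating the abstract congruence generated by a single pair into membership in the explicit subgroup $\Z(\alpha,-\beta)$. Everything else is a routine case analysis on the rank and sign of $L$.
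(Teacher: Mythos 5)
Your proposal is correct and follows essentially the same route as the paper: both rest on the dictionary between cancellative congruences on $\N^2$ and subgroups of $\Z^2$, identify that subgroup as $\Z(\alpha,-\beta)$ with $\alpha,\beta>1$, and then check that the single pair $((\alpha,0),(0,\beta))$ generates the corresponding congruence (your chain-of-rewrites induction is exactly the content hidden in the paper's claim that $\E([\gamma]_\sigma)=\{\gamma+\lambda(\alpha,-\beta)\mid\lambda\in\Z\}\cap\N^2$, and your rank case analysis is a self-contained substitute for the paper's appeal to Propositions 3.6 and 3.10 of Rosales--Garc\'{\i}a-S\'anchez). One small slip: a rank-one subgroup of $\Z^2$ is cyclic but its generator need not be primitive in $\Z^2$ (e.g.\ the paper's later example $\sigma=\langle((4,0),(0,2))\rangle$ corresponds to $L=\Z(4,-2)$), so you should take $v$ to be any generator of $L$ rather than a primitive vector; nothing downstream in your argument actually uses primitivity.
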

\begin{proof}
Assume that $S$ is minimally generated by two elements. There exists a congruence $\sigma$ such that $S\cong\N^2/\sigma$.
From Propositions 3.6 and 3.10
in \cite{Rosales3}, there exist positive integers $a$ and $b$ verifying that if $(x, y)\sigma(x_0, y_0)$ then $a(x-x_0)+b(y-y_0) =0.$ Let $M$ be the subgroup of $\Z^2$ defined by the equation $ax + by = 0.$ Then $M = \langle (\alpha,-\beta)\rangle$ with $\alpha = b/d$ and $\beta = a/d$, with $d = \gcd(a, b)$. Hence $(x - x_0, y - y_0) = k(\alpha,-\beta)$ for some integer $k$,
and thus $(x, y) = (x_0, y_0) + k(\alpha,-\beta)$. From this easily follows that
$\sigma=\langle((\alpha,0),(0,\beta))\rangle$.
Since $S$ is not free, $\sigma$ is not trivial and therefore with $\alpha,\beta\neq 0$. Using now that $S$ is reduced and minimally generated by two elements we obtain that $\alpha,\beta>1$.

Assume now that $S\cong\N^2/\sigma$ with $\sigma=\langle((\alpha,0),(0,\beta))\rangle$ and $\alpha,\beta>1$. Using that $\alpha,\beta>1$ it is straightforward to prove that $S$ is minimally generated by two elements and that $S$ is reduced.
We see that
$S$
is cancellative. For every  $\gamma\in\N^2$,  $\E([\gamma]_\sigma)=\{\gamma+\lambda(\alpha,-\beta)|\lambda\in\Z\}\cap\N^2$.
This implies that for all $\gamma'\in\N^2$, we have  $\gamma~\sigma~ \gamma'$ if and only if  $\gamma-\gamma'$ belongs to the subgroup $G$ of $\Z^2$ generated by $(\alpha,-\beta)$. Hence, the congruence $\sigma$ is defined by a subgroup of $\Z^2$ and therefore $S$ is cancellative (see \cite[Proposition 1.4]{Rosales3}).
\end{proof}

In order to compute the asymptotic $\w$-primality of an element $[\gamma]_\sigma\in S$ we give explicitly $\minimals (\E([\gamma]_\sigma+S))$ and $\w([\gamma]_\sigma)$.
Denote by $\lfloor \frac a b \rfloor$ the integer part of $\frac a b$ with  $a,b\in\N$ and $b\neq 0$.

\begin{lemma}\label{lema:multiplos2}
Let $S=\N^2/\sigma$ with $\sigma=\langle((\alpha,0),(0,\beta))\rangle$ and $\alpha,\beta>1$.
Then for all $\gamma=(\gamma_1,\gamma_2)\in\N^2$,
$
\E([\gamma]_\sigma)=\{\gamma+\lambda(\alpha,-\beta)|\lambda\in\Z,-\lfloor\frac {\gamma_1} \alpha \rfloor
\leq \lambda \leq \lfloor \frac {\gamma_2}\beta\rfloor\},
$
\begin{equation}\label{eq1}
\begin{multlined}
\minimals ( \E([\gamma]_\sigma+S) )\\=
\minimals ( \E([\gamma]_\sigma)  \cup
\{ (0,\gamma_2+(\lfloor \frac {\gamma_1}\alpha\rfloor + 1)\beta),
(\gamma_1+(\lfloor \frac {\gamma_2}\beta\rfloor + 1)\alpha,0)\})
\end{multlined}
\end{equation}
and
$\w([\gamma]_\sigma)={\rm max}\{ \gamma_2+(\lfloor \frac {\gamma_1}\alpha\rfloor + 1)\beta, \gamma_1+(\lfloor \frac {\gamma_2}\beta\rfloor + 1)\alpha \}$.
\end{lemma}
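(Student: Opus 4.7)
The plan is to prove the three assertions in sequence, each building on the previous one. Throughout, write $q_1=\lfloor\gamma_1/\alpha\rfloor$, $r_1=\gamma_1-q_1\alpha$, $q_2=\lfloor\gamma_2/\beta\rfloor$, $r_2=\gamma_2-q_2\beta$, so that $0\le r_i<\alpha,\beta$ respectively.

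First I would handle the description of $\E([\gamma]_\sigma)$. From the second half of the preceding lemma we already know that $\mu\ \sigma\ \gamma$ iff $\mu-\gamma$ belongs to the subgroup $\langle(\alpha,-\beta)\rangle$ of $\Z^2$. Hence $\E([\gamma]_\sigma)=\{\gamma+\lambda(\alpha,-\beta):\lambda\in\Z\}\cap\N^2$, and the non‐negativity conditions $\gamma_1+\lambda\alpha\ge 0$ and $\gamma_2-\lambda\beta\ge 0$ translate exactly to $-q_1\le\lambda\le q_2$.

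Next I would describe $\E([\gamma]_\sigma+S)$ and identify its minimal elements. By definition, $\mu\in\E([\gamma]_\sigma+S)$ iff there exist $\delta\in\N^2$ and $\lambda\in\Z$ with $\mu=\gamma+\lambda(\alpha,-\beta)+\delta\in\N^2$. Splitting on $\lambda$ gives three regimes: for $-q_1\le\lambda\le q_2$, the contribution is $\mu'_\lambda+\N^2$ with $\mu'_\lambda\in\E([\gamma]_\sigma)$; for $\lambda>q_2$, the componentwise smallest admissible $\mu$ is $(\gamma_1+\lambda\alpha,0)$, and among these the minimum (again componentwise) is obtained at $\lambda=q_2+1$, giving $(\gamma_1+(q_2+1)\alpha,0)$; symmetrically, for $\lambda<-q_1$ the extremal point is $(0,\gamma_2+(q_1+1)\beta)$. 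Hence $\E([\gamma]_\sigma+S)$ is covered by the translates $\mu+\N^2$ as $\mu$ ranges over $\E([\gamma]_\sigma)$ together with the two extra points, which yields the formula in \eqref{eq1}. A short verification shows that the two extra points are incomparable to every $\mu'_\lambda$: for instance, $(0,\gamma_2+(q_1+1)\beta)\le\mu'_\lambda$ would force $\lambda\le-q_1-1$, outside the admissible range. So all displayed points are already minimal and $\minimals$ only removes duplicates; I would note this explicitly.

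Finally I would compute $\omega([\gamma]_\sigma)$ via Proposition \ref{p:wminimales}, taking the maximum of $\|\cdot\|$ over the minimals. For $\mu'_\lambda$ one has $\|\mu'_\lambda\|=\gamma_1+\gamma_2+\lambda(\alpha-\beta)$, a linear function of $\lambda$, so its maximum on $[-q_1,q_2]$ is attained at an endpoint. Using $\gamma_1=q_1\alpha+r_1$ and $\gamma_2=q_2\beta+r_2$ with $r_1<\alpha$, $r_2<\beta$, one checks
\[
\|\mu'_{q_2}\|=\gamma_1+q_2\alpha+r_2<\gamma_1+(q_2+1)\alpha,\qquad
\|\mu'_{-q_1}\|=\gamma_2+q_1\beta+r_1<\gamma_2+(q_1+1)\beta,
\]
so every $\mu'_\lambda$ is strictly dominated in norm by one of the two extra points. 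Splitting by the sign of $\alpha-\beta$ (with the case $\alpha=\beta$ handled separately since then $\|\mu'_\lambda\|$ is constant and equal to $\gamma_1+\gamma_2$, again strictly smaller) gives the stated formula for $\omega([\gamma]_\sigma)$.

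The main obstacle is the second step: one must be careful to check that the proposed list of minimals is exhaustive (no minimal element has been missed) and that each listed element really is minimal, since the minimals operator in \eqref{eq1} might otherwise hide a discard. Once this is clean, the third step is a bounded linear optimisation plus a comparison with the two extremal norms.
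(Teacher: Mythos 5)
Your proposal is correct and follows essentially the same route as the paper: describe $\E([\gamma]_\sigma)$ via the subgroup $\langle(\alpha,-\beta)\rangle$, cover $\E([\gamma]_\sigma+S)$ by the translates $\mu+\N^2$ together with the two axis points, and then maximise the linear function $\gamma_1+\gamma_2+\lambda(\alpha-\beta)$ over the endpoints and compare with the axis norms. The only nitpick is that your two displayed strict inequalities are not both valid unconditionally (the first needs $r_2<\alpha$, which can fail when $\alpha<\beta$); they must be read inside the case split on the sign of $\alpha-\beta$ that you invoke afterwards, exactly as the paper does.
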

\begin{proof}
For every element $\gamma$ the set $\E([\gamma]_\sigma)$ is equal to $\{\gamma+\lambda (\alpha,-\beta)|\lambda\in \Z \}\cap \N^2$. Note that $\gamma+\lambda (\alpha,-\beta)\in\N^2$ if and only if $-\lfloor\frac {\gamma_1} \alpha \rfloor
\leq \lambda \leq \lfloor \frac {\gamma_2}\beta\rfloor$.

 Using now that for every element $z$ in $[\gamma]_\sigma+S$ there exists $\delta\in\N^2$ such that $z=[\gamma+\delta]_\sigma$, we obtain  $\E([\gamma]_\sigma+S)=(\{\gamma+\lambda (\alpha,-\beta)|\lambda\in\Z\}+\N^2)\cap\N^2$.
 To describe this set we need to obtain the intersection of $\E([\gamma]_\sigma +S)$ with the axes. These are the points $ (0,\gamma_2+(\lfloor \frac {\gamma_1}\alpha\rfloor + 1)\beta)$ and $(\gamma_1+(\lfloor \frac {\gamma_2}\beta\rfloor + 1)\alpha,0)$. Thus
 $\E([\gamma]_\sigma+S)=(\E([\gamma]_\sigma)+\N^2)\cup
( (0,\gamma_2+(\lfloor \frac {\gamma_1}\alpha\rfloor + 1)\beta)+\N^2)
 \cup
 ((\gamma_1+(\lfloor \frac {\gamma_2}\beta\rfloor + 1)\alpha,0)+\N^2)
 $ and
therefore (\ref{eq1}) is satisfied.

The set
$\{ \parallel x\parallel \in\N^2| x\in\E([\gamma]_\sigma)\}$ is equal to $\{ \gamma_1+\gamma_2+\lambda(\alpha-\beta) | \lambda\in\Z, -\lfloor \frac {\gamma_1}\alpha\rfloor  \leq \lambda\leq \lfloor \frac {\gamma_2}\beta\rfloor \}$. If $\alpha\geq \beta$ the maximum is achieved by  the element $\gamma+\lfloor \frac{\gamma_2} \beta \rfloor (\alpha,-\beta)$.
Using now that $\gamma_2  - \lfloor \frac {\gamma_2}\beta\rfloor \beta \leq \beta $,  we obtain
$\parallel\gamma+\lfloor \frac{\gamma_2} \beta \rfloor (\alpha,-\beta)\parallel
\leq \parallel (\gamma_1+(\lfloor \frac {\gamma_2}\beta\rfloor + 1)\alpha,0)\parallel$.
Analogously, if $\alpha\leq \beta$
it can be proved that
$\parallel\gamma-\lfloor \frac{\gamma_1} \alpha \rfloor (\alpha,-\beta)\parallel
\leq \parallel (0,\gamma_2+(\lfloor \frac {\gamma_1}\alpha\rfloor + 1)\beta)\parallel$.
Hence,
$\w([\gamma]_\sigma)={\rm max}\{ \gamma_2+(\lfloor \frac {\gamma_1}\alpha\rfloor + 1)\beta, \gamma_1+(\lfloor \frac {\gamma_2}\beta\rfloor + 1)\alpha \}$.

\end{proof}

The proof of the above result can be used to compute the $\w$-primality of any element of a numerical monoid generated by two elements. Let $S$ be the numerical monoid generated by $x<y\in \N$ and let $z=a x +b y\in S$ with $(a,b)\in \N^2$.
 Take $\sigma=\langle ((\alpha,0),(0,\beta))\rangle$ with $\frac {\lcm(x,y)}x=\alpha>\beta=\frac {\lcm(x,y)}y$. It verifies that  $S\cong \N^2/\sigma$ and, by Lemma \ref{lema:multiplos2},
 $\w(z)=\w([(a,b)]_\sigma)=||(a,b)+q(\alpha,-\beta)+(\alpha,-r)||$ where $b=\beta q+r$ with $0\leq r<\beta$.

\begin{example}
Let us consider the congruence
$\sigma=\langle ((7,0),(0,5))\rangle$ and take $\gamma=(6,7)$. In this case, the ideal
$\E([\gamma]_\sigma+S)$ is showed in Figure \ref{f1} and it is generated by the elements $\{(0,12),(6,7),(13,2),(20,0)\}$.
\begin{figure}[h]
\centering
\includegraphics[width=.7\textwidth]{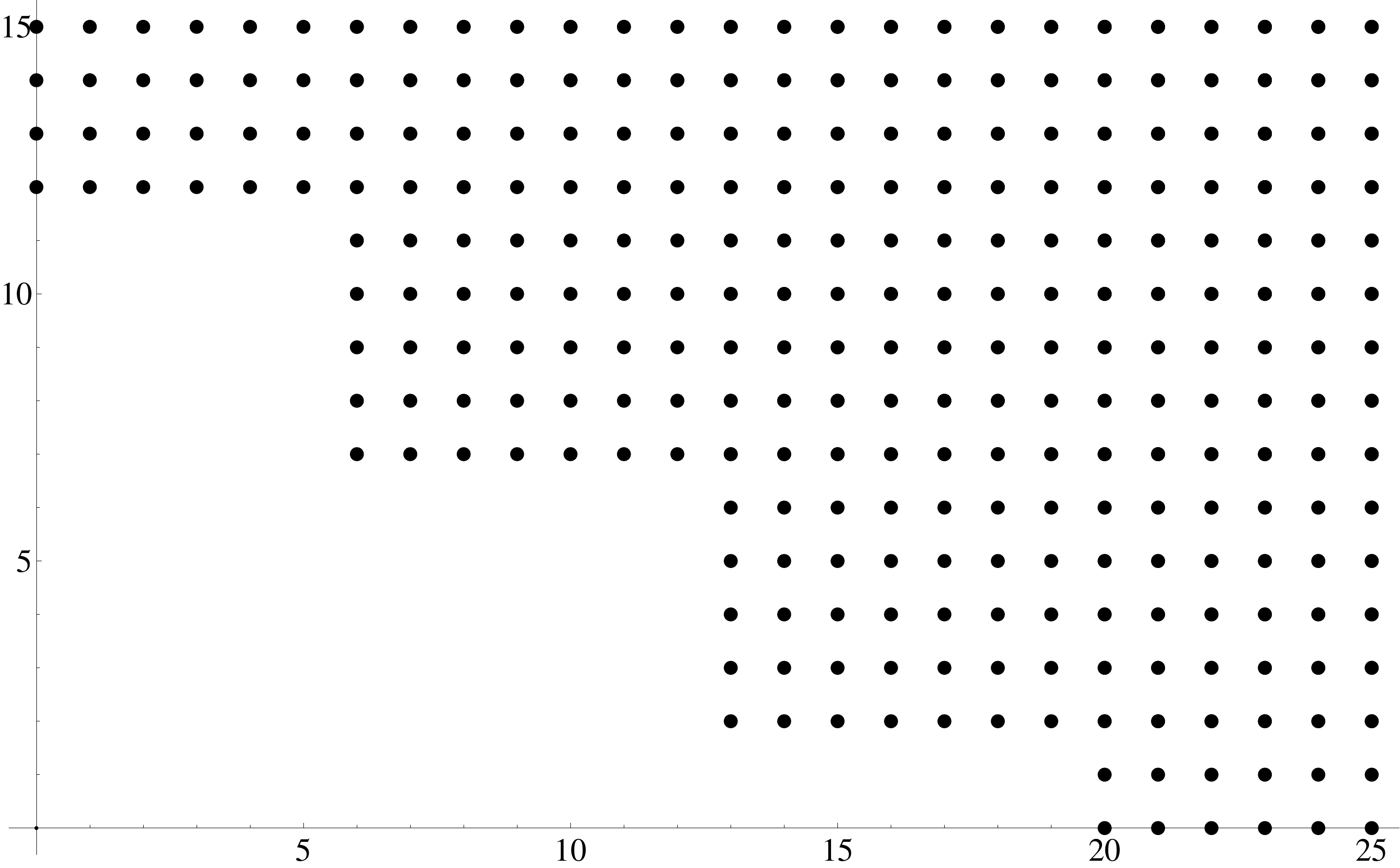}
\caption{$\E([(6,7)]_\sigma+\N^2/\sigma)$.}\label{f1}
\end{figure}
The $\w$-primality of $[(6,7)]_\sigma$ is equal to
$\max\{0+12,6+7,13+2,20+0\}=20$.
\end{example}

\begin{lemma}\label{limites}
For all $a,b\in\N$ with $b\neq 0,$  it  holds that $\lim_{n\to \infty}\lfloor\frac{na}b \rfloor \frac 1 n=\frac a b$.
\end{lemma}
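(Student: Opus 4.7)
The plan is to apply the sandwich (squeeze) theorem using the elementary bounds that define the integer part. Specifically, for any real number $x$ we have $x - 1 < \lfloor x \rfloor \leq x$, and this is the only ingredient needed.

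First I would specialize the inequality to $x = \frac{na}{b}$, obtaining
\[
\frac{na}{b} - 1 \;<\; \Bigl\lfloor \frac{na}{b} \Bigr\rfloor \;\leq\; \frac{na}{b}.
\]
Since $n > 0$, I can divide through by $n$ without reversing inequalities, yielding
\[
\frac{a}{b} - \frac{1}{n} \;<\; \frac{1}{n}\Bigl\lfloor \frac{na}{b} \Bigr\rfloor \;\leq\; \frac{a}{b}.
\]
Then I would take the limit as $n \to \infty$. The left-hand side tends to $\frac{a}{b}$ because $\frac{1}{n} \to 0$, and the right-hand side is constantly $\frac{a}{b}$, so by the squeeze theorem the middle expression also converges to $\frac{a}{b}$.

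There is essentially no obstacle here; the only point to mention is that we need $b \neq 0$ to make the expression $\frac{na}{b}$ well defined, which is precisely the hypothesis of the lemma. The case $a = 0$ is also covered, since then $\lfloor \frac{na}{b} \rfloor = 0$ for every $n$ and both sides of the claimed equality equal $0$.
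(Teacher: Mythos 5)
Your proof is correct. It differs in presentation from the paper's: the paper invokes the division algorithm, writing $na = bm + t$ with $0 \leq t < b$, so that $\lfloor \frac{na}{b} \rfloor = m$ and $\frac{1}{n} = \frac{a}{bm+t}$, and then computes $\lim_{m\to\infty} \frac{ma}{bm+t} = \frac{a}{b}$ directly; you instead use the defining inequality $x - 1 < \lfloor x \rfloor \leq x$ and the squeeze theorem. Both rest on the same underlying fact (the floor differs from its argument by less than $1$), but your version is arguably cleaner and slightly more careful: the paper's final step takes a limit as $m \to \infty$, which tacitly assumes $a \neq 0$ (when $a = 0$ one has $m = 0$ for all $n$, so $m$ does not tend to infinity and that case must be disposed of separately, as you explicitly do). Your squeeze argument handles all $a \in \N$ uniformly and avoids the change of limiting variable from $n$ to $m$ altogether.
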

\begin{proof}
For every $n\in\N$, $na$ can be expressed as $na=bm+t$ with $m\in\N$ and $0\leq t<b$. Hence,
$\lfloor\frac{na}b \rfloor \frac 1 n =\lfloor \frac {b m+t} {b} \rfloor\frac a {bm+t}=\lfloor m +\frac {t} {b} \rfloor\frac a {bm+t}= m \frac a {bm+t}\stackrel{m\to \infty}\longrightarrow\frac a{b}.$
\end{proof}

The following result gives us an explicit formulation of the $\ow$-primality
of the elements of the monoids studied in this section.

\begin{proposition}
Let $S=\N^2/\sigma$ with
$\sigma=\langle((\alpha,0),(0,\beta))\rangle$ and $\alpha,\beta>1$.
If
$\alpha\geq\beta$, then  $\ow([(\gamma_1,\gamma_2)]_\sigma)=\gamma_1+\frac{\alpha }{\beta}\gamma_2$ and
if $\alpha<\beta$, then $\ow([(\gamma_1,\gamma_2)]_\sigma)=\frac{\beta }{\alpha}\gamma_1+\gamma_2$.
\end{proposition}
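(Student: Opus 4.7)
The plan is to reduce the statement to Lemma \ref{lema:multiplos2} applied at the element $n[(\gamma_1,\gamma_2)]_\sigma = [(n\gamma_1,n\gamma_2)]_\sigma$, and then pass to the limit using Lemma \ref{limites}. Since $\sigma$ is linear in both coordinates, the representative $(n\gamma_1,n\gamma_2)$ is in $\N^2$ for every $n\in\N$, so the lemma applies directly and gives
\[
\omega(n[(\gamma_1,\gamma_2)]_\sigma)=\max\Bigl\{n\gamma_2+\bigl(\lfloor n\gamma_1/\alpha\rfloor+1\bigr)\beta,\; n\gamma_1+\bigl(\lfloor n\gamma_2/\beta\rfloor+1\bigr)\alpha\Bigr\}.
\]

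Next I would divide by $n$ and take the limit inside the maximum of the two sequences (which is justified because $\max$ is continuous). By Lemma \ref{limites}, $\frac{1}{n}\lfloor n\gamma_1/\alpha\rfloor\to\gamma_1/\alpha$ and $\frac{1}{n}\lfloor n\gamma_2/\beta\rfloor\to\gamma_2/\beta$, while the additive constants $\beta/n$ and $\alpha/n$ vanish. Thus
\[
\overline{\omega}([(\gamma_1,\gamma_2)]_\sigma)=\max\Bigl\{\gamma_2+\tfrac{\beta}{\alpha}\gamma_1,\; \gamma_1+\tfrac{\alpha}{\beta}\gamma_2\Bigr\}.
\]

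Finally, I would resolve the maximum according to the sign of $\alpha-\beta$. A direct subtraction gives
\[
\Bigl(\gamma_1+\tfrac{\alpha}{\beta}\gamma_2\Bigr)-\Bigl(\tfrac{\beta}{\alpha}\gamma_1+\gamma_2\Bigr)=\Bigl(1-\tfrac{\beta}{\alpha}\Bigr)\gamma_1+\Bigl(\tfrac{\alpha}{\beta}-1\Bigr)\gamma_2,
\]
which is non-negative when $\alpha\geq\beta$ and non-positive when $\alpha\leq\beta$. Thus the two cases of the statement follow immediately.

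I do not anticipate any serious obstacle: the only subtlety is checking that the $\max$ of the two sequences converges to the $\max$ of the limits, which is standard. Everything else is a direct substitution into the closed form for $\omega$ provided by Lemma \ref{lema:multiplos2} combined with the elementary limit of Lemma \ref{limites}.
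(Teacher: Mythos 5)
Your proof is correct and follows essentially the same route as the paper: apply Lemma \ref{lema:multiplos2} to the representative $(n\gamma_1,n\gamma_2)$, divide by $n$, and invoke Lemma \ref{limites}. The only (harmless) difference is that you carry both terms of the maximum through the limit and resolve it at the end by comparing $\gamma_1+\frac{\alpha}{\beta}\gamma_2$ with $\frac{\beta}{\alpha}\gamma_1+\gamma_2$, whereas the paper selects the dominant term of the max before passing to the limit; your version is, if anything, slightly more careful.
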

\begin{proof}
Assume that $\alpha\geq \beta$ (if $\alpha < \beta$ proceed similarly). By Lemma \ref{lema:multiplos2},  $\w([\gamma]_\sigma)=\gamma_1+(\lfloor \frac {\gamma_2}\beta\rfloor + 1)\alpha$. Thus, $\ow([\gamma]_\sigma)=\lim_{n\to\infty} \frac {\w(n[\gamma]_\sigma)} n=\lim_{n\to\infty}(n\gamma_1+(\lfloor \frac {n\gamma_2}\beta\rfloor + 1)\alpha)/n$. Using now Lemma \ref{limites}, we have $\ow([\gamma]_\sigma)=\gamma_1+\frac{\alpha \gamma_2}{\beta}$.
\end{proof}

If $\alpha\geq \beta$, since $((\alpha,0),(0,\beta))\in\sigma$ means that
$\alpha[e_1]_\sigma=\beta[e_2]_\sigma$,  abusing of the notation we could say that $\ow([\gamma]_\sigma)=
\frac{[\gamma]_\sigma}{[e_1]_\sigma}=\frac{\gamma_1[e_1]_\sigma+\gamma_2[e_2]_\sigma}{[e_1]_\sigma}=
\gamma_1+ \gamma_2\frac{[e_2]_\sigma}{[e_1]_\sigma}=\gamma_1+\gamma_2\frac \alpha\beta $.

\begin{corollary}
Let $S=\N^2/\sigma$ be an reduced atomic monoid finitely generated by two different elements and let $\sigma=\langle((\alpha,0),(0,\beta))\rangle$ a non-trivial congruence.
 Then:
\begin{itemize}
\item If $\alpha\geq\beta$, then  $\ow([e_1]_\sigma)=1$ and $\ow(S)=\ow([e_2]_\sigma)=\frac \alpha\beta$.
\item If $\alpha<\beta$, then  $\ow([e_2]_\sigma)=1$ and $\ow(S)=\ow([e_1]_\sigma)=\frac \beta\alpha$.
\end{itemize}
Note that  if $\ow([e_i]_\sigma)\neq 1$, the elasticity of $S$ is equal to $\ow([e_i]_\sigma)$ (see \cite{atomicos}).
\end{corollary}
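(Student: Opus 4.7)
The plan is to derive this corollary as a direct application of the preceding Proposition, specialized to the two atoms $[e_1]_\sigma$ and $[e_2]_\sigma$. The asymptotic $\omega$-primality of the whole monoid was already observed, at the beginning of Section~\ref{sec:2gen}, to satisfy $\ow(S)=\max\{\ow(s_i)\mid i=1,\dots,p\}$ when $S=\langle s_1,\dots,s_p\rangle$; for our monoid this reduces to $\ow(S)=\max\{\ow([e_1]_\sigma),\ow([e_2]_\sigma)\}$. So once the two values $\ow([e_i]_\sigma)$ are known, the statement about $\ow(S)$ follows by taking a maximum.

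First I would handle the case $\alpha\geq\beta$. Write $e_1=(1,0)$ and $e_2=(0,1)$, and plug each into the formula $\ow([(\gamma_1,\gamma_2)]_\sigma)=\gamma_1+\tfrac{\alpha}{\beta}\gamma_2$ provided by the previous Proposition. The first substitution gives $\ow([e_1]_\sigma)=1+\tfrac{\alpha}{\beta}\cdot 0=1$, while the second gives $\ow([e_2]_\sigma)=0+\tfrac{\alpha}{\beta}\cdot 1=\tfrac{\alpha}{\beta}$. Since $\alpha\geq\beta$ yields $\tfrac{\alpha}{\beta}\geq 1$, taking the maximum over the two atoms produces $\ow(S)=\tfrac{\alpha}{\beta}=\ow([e_2]_\sigma)$, as claimed.

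The case $\alpha<\beta$ is completely symmetric: apply the other branch of the Proposition, namely $\ow([(\gamma_1,\gamma_2)]_\sigma)=\tfrac{\beta}{\alpha}\gamma_1+\gamma_2$, to each $e_i$. This gives $\ow([e_1]_\sigma)=\tfrac{\beta}{\alpha}$ and $\ow([e_2]_\sigma)=1$, and since $\tfrac{\beta}{\alpha}>1$ the maximum is attained on $[e_1]_\sigma$.

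There is essentially no obstacle here; the only care needed is to make sure the atoms of $S$ are indeed $[e_1]_\sigma$ and $[e_2]_\sigma$, which is guaranteed by the hypothesis that $S$ is reduced and minimally generated by two elements (this was established in the Lemma at the start of Section~\ref{sec:2gen}). The final remark comparing $\ow([e_i]_\sigma)$ with the elasticity is simply a citation of the corresponding result in~\cite{atomicos} and requires no additional argument.
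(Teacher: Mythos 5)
Your proposal is correct and follows exactly the route the paper intends: the corollary is stated without proof precisely because it is the immediate specialization of the preceding Proposition to $\gamma=e_1$ and $\gamma=e_2$, combined with the identity $\ow(S)=\max\{\ow(s_i)\}$ recorded at the start of Section~\ref{sec:2gen}. Nothing further is needed.
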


The monoid $S$ is a numerical monoid if and only if $\alpha$ and $\beta$ are coprime. In such  case $S\cong \langle \alpha, \beta\rangle$.

In \cite{Chapman} the asymptotic $\w$-primality and $\w$-primality of numerical monoids generated by two elements is computed.  In that paper it is also given the formulation of the $\w$-primality for  two other families of numerical monoids (monoids generated by $\{n,n+1,\dots ,2n-1\}$ with $n\ge 3$ and by $\{n,n+1,\dots ,2n-2\}$ with $n\ge 4$). The approach proposed in this work allows us to obtain the same formulation for the $\w$-primality of an element, but it can be used in a more general scope. For example, consider  $S$ to be the non-torsion free monoid given by the presentation $\left\{((4,0),(0,2))\right\}$. Since $4$ and $2$ are not coprime, $S$ is not a numerical monoid.
In this case, we have  $\ow([e_1]_\sigma)=1$ and $\ow(S)=\ow([e_2]_\sigma)=\frac{4}{2}=2$.

\section{Asymptotic $\w$-primality in Archimedean semigroups}\label{sec:archi}

We start by introducing some basic concepts used in this section.
An element $x\neq 0$ of a monoid $S$ is Archimedean if for all $y\in S\setminus\{0\}$ there exists a positive integer $k$ such that $y|kx$.
We say
that $S$ is quasi-Archimedean if the zero element is not Archimedean and the rest of elements
in $S$ are Archimedean.
If a monoid is finitely generated, cancellative and quasi-Archimedean, then it verifies that
for all $x,y \in S\setminus\{0\}$, there exist positive integers $p$ and $q$ such that $px=qy$
(see \cite[Theorem 2.1]{MJ} or  \cite{Levin}).
All monoids in Section \ref{sec:2gen}, the submonoids of $\N$ and in particular numerical semigroups are quasi-Archimedean.

\begin{remark}\label{r:ks}
If $S$ is a quasi-Archimedean cancellative monoid, for all $i=2,\dots,p$ there exist $a_i,b_i\in\N\setminus\{0\}$ such that $a_i[e_1]_\sigma=b_i[e_i]_\sigma$. Consider now $a_2\dots a_p [e_1]_\sigma$, applying the above equalities we obtain that $a_2\dots a_p [e_1]_\sigma=b_2 a_3\dots a_p[e_2]_\sigma=\dots=
a_2\dots a_{p-1}b_p[e_p]_\sigma$. For instance if $S$ is the numerical semigroup generated by $5$, $7$ and $11$ we can take $a_1=7$, $b_1=5$, $a_2=11$ and $b_2=5$, obtaining that $77\cdot 5=55\cdot 7=35\cdot 11$.
\end{remark}

Let $S$ be a quasi-Archimedean cancellative monoid. After rearranging its minimal generators we obtain that there exist $k_1\geq \dots \geq k_p\in\N\setminus\{0\}$ verifying that $k_1[e_1]_\sigma=\dots=k_p[e_p]_\sigma$.
In this way some elements of $S$ can be expressed using only the generator $[e_1]_\sigma$.
Abusing again of the notation,
we say that $\frac{[e_i]_\sigma}{[e_1]_\sigma}=\frac {k_1}{k_i}$ and in the same manner that
$\frac {[\gamma]_\sigma}{[e_1]_\sigma}=\sum_{i=1}^p \gamma_i \frac{[e_i]_\sigma}{[e_1]_\sigma}=
\sum_{i=1}^p \gamma_i \frac {k_1}{k_i}$.
Theorem \ref{teorema:ow} proves that $\ow([\gamma]_\sigma)=\frac{[\gamma]_\sigma}{[e_1]_\sigma}$. In order to prove it, we  need two  lemmas.

\begin{lemma}\label{lt1}
Let $k_1\geq\dots\geq k_p\in\N$,  $\gh\in\N^p$,  $\Gamma$ be equal to $\{\gh + \sum_{i,j=1}^p\mu_{ij}(k_ie_i-k_je_j)|\mu_{ij}\in\Z\}$ and let $\Theta=\{\gamma\vee(\gamma+(k_2e_2-k_1e_1))\vee\dots\vee(\gamma+(k_pe_p-k_1e_1)) |\gamma\in\Gamma\}$.
The set $I=\{x\in\Z^p|\textrm{ there exists }\gamma\in\Gamma\textrm{ such that }x\geq \gamma\}$ is equal to $\Z^p\setminus\{x\in\Z^p|\textrm{ there exists }
\phi\in\Theta\textrm{ such that }x_i<\phi_i\textrm{ for all }i=1,\dots,p\}$.
\end{lemma}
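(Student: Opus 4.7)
The plan is to reduce the statement to an explicit description of the two sets and then compare them coordinate by coordinate using the structure of the lattice underlying $\Gamma$. First I would simplify the definition of $\Theta$: a direct computation of the componentwise maximum shows that for each $\gamma\in\Gamma$,
\[
\phi(\gamma):=\gamma\vee(\gamma+k_2e_2-k_1e_1)\vee\dots\vee(\gamma+k_pe_p-k_1e_1)=(\gamma_1,\gamma_2+k_2,\dots,\gamma_p+k_p),
\]
since the first coordinate of every summand except $\gamma$ equals $\gamma_1-k_1$ while $\gamma$ contributes $\gamma_1$, and in coordinate $j\geq 2$ only the summand $\gamma+k_je_j-k_1e_1$ differs from $\gamma$, giving $\gamma_j+k_j$ instead of $\gamma_j$.

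Next I would write $\Gamma=\gh+L$ where $L$ is the $\Z$-lattice generated by $\{k_ie_i-k_je_j\}_{i,j}$, and verify the characterization $L=\{v\in\Z^p:k_j\mid v_j\text{ for all }j\text{ and }\sum_j v_j/k_j=0\}$. This is the crucial tool: elements of $L$ correspond bijectively via $n_j=v_j/k_j$ to integer tuples $(n_j)$ with $\sum_j n_j=0$, so both inclusions to be proved reduce to the arithmetic of zero-sum tuples with one-sided bounds.

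For disjointness $I\cap J=\emptyset$, assume $\gamma\leq x<\phi(\gamma')$ with $\gamma,\gamma'\in\Gamma$ and set $d=\gamma-\gamma'\in L$. The strict inequalities force $d_1<0$ and $d_j<k_j$ for $j\geq 2$; divisibility upgrades these to $d_1\leq-k_1$ and $d_j\leq 0$, giving $\sum_j d_j/k_j\leq-1$, which contradicts $d\in L$.

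For coverage $I\cup J=\Z^p$, set $y=x-\gh$ and $s=\sum_j\lfloor y_j/k_j\rfloor$. If $s\geq 0$, pick integers $n_j\leq\lfloor y_j/k_j\rfloor$ with $\sum_j n_j=0$ (feasible because the maximum possible sum is $s$ and the $n_j$ can be decreased freely); then $w=(n_jk_j)\in L$ satisfies $\gh+w\leq x$, so $x\in I$. If $s<0$, pick $n_1\geq\lfloor y_1/k_1\rfloor+1$ and $n_j\geq\lfloor y_j/k_j\rfloor$ for $j\geq 2$ with $\sum_j n_j=0$ (feasible because the minimum possible sum $s+1\leq 0$); then $\gamma'=\gh+(n_jk_j)\in\Gamma$ satisfies $x<\phi(\gamma')$ componentwise, so $x\in J$. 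The main subtlety is spotting the correct "budget" for realizing such zero-sum tuples under one-sided bounds; once the dichotomy on the sign of $s$ is identified, everything else is a routine verification.
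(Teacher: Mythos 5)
Your proof is correct, and its coverage half is essentially the paper's own argument: your dichotomy on the sign of $s=\sum_j\lfloor (x_j-\gh_j)/k_j\rfloor$ is exactly the paper's case split on whether $-k_1\sum_{i\ge 2}q_i\le x_1$ (these conditions are equivalent since the left-hand side is an integer multiple of $k_1$), and the witnesses you construct in each case coincide with the paper's $y$ and $y'=\phi(y)$. The genuine difference is that you also prove disjointness, via the clean characterization $L=\{v\in\Z^p: k_j\mid v_j,\ \sum_j v_j/k_j=0\}$, whereas the paper only verifies that every $x\in\Z^p$ lies in one of the two sets and leaves $I\cap J=\emptyset$ implicit; your version is therefore the more complete one, and the divisibility-plus-zero-sum description of the lattice is what makes that half painless. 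One caveat shared with the paper: both arguments divide by the $k_i$ (your lattice description and the paper's division algorithm step), so both tacitly require $k_p>0$, which is not literally imposed by the hypothesis $k_1\ge\dots\ge k_p\in\N$ but does hold in the lemma's only application.
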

\begin{proof}
Let $G$ be the subgroup of $\Z^p$ spanned by $\{k_2e_2-k_1e_1,\dots, k_pe_p-k_1e_1\}$. Then $\Gamma= \gh+G$ and $I=\Gamma+\N^p$. An element $x$ is in $I$ if and only if $x-\gh\in G+\N^p$. Also $a+(b\vee c)=(a+b)\vee(a+c)$ for all $a,b,c\in\Z^p$. For this reason, we may assume without loss of generality that $\gh=0$.

Take $x=(x_1,\dots,x_p)\in\Z^p$. For every $i\in\{2,\dots,p\}$ let $q_i,r_i\in\Z$ such that $x_i=k_iq_i+r_i$, with $0\leq r_i<k_i$ (division algorithm). Set $y=(-k_1\sum_{i=2}^pq_i,q_2k_2,\dots,q_pk_p)$ and $y'=y\vee(y+k_2e_2-k_1e_1)\vee\dots\vee(y+k_pe_p-k_1e_1)$. Then $y\in G$ and $y'=(-k_1\sum_{i=2}^pq_i,(q_2+1)k_2,\dots,(q_p+1)k_p)\in\Theta$. If $-k_1\sum_{i=2}^p q_i\leq x_1$, then $y\leq x$ and thus $x\in G+\N^p$. Otherwise, $x<y'\in\Theta$.
\end{proof}

\begin{lemma}\label{lt2}
Let $S=\N^p/\sigma=\langle s_1,\dots,s_p\rangle$ be a cancellative monoid with $\sigma$ a congruence, let $k_1\geq \dots\geq k_p\in\N$ be such that $k_1s_1=\dots=k_ps_p$ and let $\gamma\in\N^p$. Then every element $x=(x_1,\dots,x_p)\in\N^p\setminus\{0\}$ fulfilling that
\begin{equation}\label{ee1}
\sum_{i=1}^p\frac{k_1\cdots k_p}{k_i}x_i\geq (p-1)k_1\cdots k_p+\sum_{i=1}^p\frac{k_1\cdots k_p}{k_i}\gamma_i
\end{equation}
belongs to $\E([\gamma]_\sigma+S)$.
\end{lemma}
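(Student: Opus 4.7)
The plan is to apply Lemma~\ref{lt1} with $\hat\gamma=\gamma$ and to exploit a weighted linear functional that is constant on the auxiliary set $\Theta$ produced by that lemma. Let $G\subseteq\Z^p$ denote the subgroup generated by the vectors $k_ie_i-k_je_j$ for $i,j=1,\dots,p$, so that $\Gamma=\gamma+G$; Lemma~\ref{lt1} then identifies $\gamma+G+\N^p$ with the complement in $\Z^p$ of the points strictly dominated coordinate-wise by some element of $\Theta$. Since $S$ is cancellative and $k_is_i=k_js_j$ for all $i,j$, the subgroup of $\Z^p$ determining $\sigma$ contains $G$; so if $x=\gamma+y+\delta$ with $y\in G$ and $\delta\in\N^p$, then $[x]_\sigma=[\gamma+\delta]_\sigma\in[\gamma]_\sigma+S$. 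Hence $(\gamma+G+\N^p)\cap\N^p\subseteq\E([\gamma]_\sigma+S)$, and it suffices to show that any $x\in\N^p$ satisfying~(\ref{ee1}) is not strictly dominated by any $\phi\in\Theta$.

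To make this condition checkable I would parametrize $\Theta$ explicitly. Writing a generic $y\in G$ as $\sum_{i=2}^p\mu_i(k_ie_i-k_1e_1)$ with $\mu_i\in\Z$, the coordinate-wise join $y\vee(y+k_2e_2-k_1e_1)\vee\cdots\vee(y+k_pe_p-k_1e_1)$ can be read off slot-by-slot: in the first coordinate $y$ itself realises the maximum (every shift subtracts $k_1$), while in coordinate $i\geq 2$ the shift by $k_ie_i-k_1e_1$ lifts $\mu_ik_i$ to $(\mu_i+1)k_i$. Thus every $\phi\in\Theta$ has the form
\[
\phi=\gamma+\bigl(-k_1\textstyle\sum_{i=2}^p\mu_i,\,(\mu_2+1)k_2,\dots,(\mu_p+1)k_p\bigr).
\]

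The crux---and, in my view, the principal obstacle---is to isolate the right linear functional on $\Theta$. Setting $c_i:=k_1\cdots k_p/k_i$, so that $c_ik_i=k_1\cdots k_p$ for every $i$, the $-k_1\sum\mu_i$ appearing in the first slot pairs against $c_1=k_2\cdots k_p$ to annihilate the $\mu_i$-contributions coming from the remaining slots, while the ``$+1$''-shifts survive and contribute $(p-1)k_1\cdots k_p$ in total. A short computation therefore yields
\[
\sum_{i=1}^p c_i\phi_i=\sum_{i=1}^p c_i\gamma_i+(p-1)k_1\cdots k_p\qquad\text{for every }\phi\in\Theta,
\]
which says geometrically that $\Theta$ is contained in a single affine hyperplane; intuitively, $G$ lies in $\{y:\sum_iy_i/k_i=0\}$ and each of the $p-1$ shifts adds one unit to this weighted sum.

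The conclusion is then immediate. If some $\phi\in\Theta$ strictly dominated $x$, then $x_i\leq\phi_i-1$ for all $i$, and since each $c_i$ is positive,
\[
\sum_{i=1}^p c_ix_i\leq\sum_{i=1}^p c_i\phi_i-\sum_{i=1}^p c_i<\sum_{i=1}^p c_i\gamma_i+(p-1)k_1\cdots k_p,
\]
contradicting~(\ref{ee1}). Therefore no $\phi\in\Theta$ strictly dominates $x$, Lemma~\ref{lt1} gives $x\in\gamma+G+\N^p$, and the first paragraph then yields $x\in\E([\gamma]_\sigma+S)$.
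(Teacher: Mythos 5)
Your proof is correct and follows essentially the same route as the paper's: apply Lemma~\ref{lt1} with $\widehat{\gamma}=\gamma$, check that every element of $\Theta$ satisfies (\ref{ee1}) with equality so that $x$ cannot be strictly coordinate-wise dominated by any of them, and then transfer $x\in\Gamma+\N^p$ into $\E([\gamma]_\sigma+S)$ via the subgroup of $\Z^p$ defining the congruence $\sigma$. The only difference is that you make explicit the computation the paper dismisses as ``easy to prove.''
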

\begin{proof}
Assume that $x\in\N^p$ verifies (\ref{ee1}). Take $\gh=\gamma$ and $\Gamma$, $\Theta$ and $I$ as in Lemma \ref{lt1}. It is easy to prove that for every element in $\Theta$  equality in (\ref{ee1}) holds, and
thus,  by Lemma \ref{lt1},  $x\in I$.
This implies that  there exist $\gamma'\in\Gamma$ and $y\in\N^p$ such that
 $x=\gamma'+y$.
 Since $\gamma'\in\Gamma$, there exist $\mu_{ij}\in\Z$ with $i,j\in\{1,\dots,p\}$ such that $\gamma=\gamma'+\sum_{i,j=1}^p\mu_{ij}(k_ie_i-k_je_j)$.
 Thus, $\gamma+y=x+\sum_{i,j=1}^p\mu_{ij}(k_ie_i-k_je_j)\in\N^p$.
 Since $S$ is cancellative there exists $G$ a subgroup of $\Z^p$ such that for every $a,b\in\N^p$, $a\,\sigma\,b$ if and only if $a-b\in G$ (see \cite[Proposition 1.4]{Rosales3}).
Using this fact and that  $k_ie_i\,\sigma\,k_je_j$, we have that
$\sum_{i,j=1}^p\mu_{ij}(k_ie_i-k_je_j)\in G$ and therefore
$x \,\sigma\,x+\sum_{i,j=1}^p\mu_{ij}(k_ie_i-k_je_j)=\gamma+y$.
 Hence, $x\in\E([\gamma]_\sigma+S)$.
\end{proof}

\begin{theorem}\label{teorema:ow}
Let $S=\N^p/\sigma$
 be a quasi-Archimedean cancellative reduced monoid.
There exists a rearrange $\{t_1,\dots,t_p\}$ of the set $\{1,\dots,p\}$ such that
$\ow(a)=\gamma_{t_1}+\sum_{i=2}^{p} \frac {k_{t_1} \gamma_{t_i}} {k_{t_i}} $
for every $a=[(\gamma_1,\dots,\gamma_p)]_\sigma \in S$.
\end{theorem}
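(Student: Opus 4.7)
The plan is to produce the correct rearrangement, show that $\omega(n[\gamma]_\sigma)$ is sandwiched between $nL$ and $nL + C$ for some constant $C$ independent of $n$, and divide by $n$ in the limit. Concretely, choose any permutation $t_1,\dots,t_p$ of $\{1,\dots,p\}$ with $k_{t_1}\ge\dots\ge k_{t_p}$; for readability I relabel so that $t_i=i$, and write $L:=\gamma_1+\sum_{i=2}^p k_1\gamma_i/k_i$, the value to be established for $\ow([\gamma]_\sigma)$.

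The key device is a $\Q$-valued length on $S$. Define $\pi:\Z^p\to\Q$ by $\pi(e_j):=k_1/k_j$. I claim $\pi$ descends to a monoid homomorphism $\bar\pi:S\to\Q_{\ge 0}$ with $\bar\pi([\gamma]_\sigma)=L$. Cancellativity of $S$ supplies (Proposition 1.4 in \cite{Rosales3}) a subgroup $G\subseteq\Z^p$ with $a\,\sigma\,b\iff a-b\in G$, and $G$ contains the canonical relation subgroup $H:=\langle k_ie_i-k_je_j\rangle$, which is easily seen to have rank $p-1$. Since $S$ is quasi-Archimedean and non-trivial, any two non-zero elements $a,b$ satisfy $pa=qb$ for positive integers $p,q$, forcing $\Z^p/G$ to have rational rank $1$, hence $\mathrm{rank}\,G=p-1=\mathrm{rank}\,H$. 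Thus $H$ has finite index in $G$, and because $\Q$ is torsion-free while $\pi$ kills every generator of $H$, it kills all of $G$, yielding the descent.

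For the lower bound, Lemma \ref{lt2} applied to $n\gamma$ ensures that $\{M\in\N : (M,0,\dots,0)\in\E(n[\gamma]_\sigma+S)\}$ is non-empty. Let $M_n^\star$ be its minimum; then $\delta_n^\star:=(M_n^\star,0,\dots,0)$ is automatically minimal in $\E(n[\gamma]_\sigma+S)$ under the componentwise order, since any $\eta\le\delta_n^\star$ lies on the same axis and is excluded by the definition of $M_n^\star$. Writing $M_n^\star e_1\,\sigma\,n\gamma+s$ for some $s\in\N^p$ and applying $\bar\pi$ gives $M_n^\star=nL+\bar\pi([s]_\sigma)\ge nL$, hence $\omega(n[\gamma]_\sigma)\ge\|\delta_n^\star\|\ge nL$ by Proposition \ref{p:wminimales}.

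For the upper bound, pick any $\delta\in\minimals\E(n[\gamma]_\sigma+S)$; since $S$ is reduced and cancellative no torsion exists, so $n[\gamma]_\sigma\ne 0$ and thus $\delta\ne 0$. Choose an index $i$ with $\delta_i>0$. Minimality forces $\delta-e_i\notin\E(n[\gamma]_\sigma+S)$, so the contrapositive of Lemma \ref{lt2} (applied to $n\gamma$) yields
\[
\sum_{j=1}^p\frac{k_1\cdots k_p}{k_j}\delta_j<(p-1)k_1\cdots k_p+\sum_{j=1}^p\frac{k_1\cdots k_p}{k_j}n\gamma_j+\frac{k_1\cdots k_p}{k_i}.
\]
Dividing by $k_2\cdots k_p$ and using $k_1/k_i\le k_1/k_p$ yields $\bar\pi(\delta)<nL+(p-1)k_1+k_1/k_p$, and since $k_1/k_j\ge 1$ for every $j$ we have $\|\delta\|\le\bar\pi(\delta)$. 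Thus $\omega(n[\gamma]_\sigma)\le nL+C$ with $C=(p-1)k_1+k_1/k_p$ independent of $n$, and dividing the sandwich $nL\le\omega(n[\gamma]_\sigma)\le nL+C$ by $n$ gives $\ow([\gamma]_\sigma)=L$. The only delicate step is the descent of $\pi$: a priori $G$ may be strictly larger than $H$, and the rank-one Grothendieck group structure together with the torsion-freeness of $\Q$ is what closes this gap; every other step is a direct application of Lemma \ref{lt2} or its contrapositive.
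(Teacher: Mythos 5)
Your proof is correct, and its skeleton matches the paper's: both the lower and upper bounds ultimately rest on Lemma \ref{lt2} together with the linear functional $x\mapsto\sum_i\frac{k_1\cdots k_p}{k_i}x_i$ (your $\bar\pi$ is just a normalization of it), combined with Proposition \ref{p:wminimales}. There are, however, three genuine points of divergence worth recording. First, the paper asserts without argument that $\E([\gamma]_\sigma)\subseteq H_\gamma$, i.e.\ that the functional is constant on $\sigma$-classes; since the defining subgroup $G$ of the congruence may a priori be strictly larger than the relation subgroup $H=\langle k_ie_i-k_je_j\rangle$, this needs the rank argument you give (quasi-Archimedean forces $\mathrm{rank}\,G=p-1=\mathrm{rank}\,H$, so $G/H$ is torsion and $\pi$ kills $G$ because $\Q$ is torsion-free) -- your proof makes explicit a step the paper leaves implicit. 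Second, your lower bound works for every $n$ by locating the minimal point $(M_n^\star,0,\dots,0)$ of the ideal on the first axis and evaluating $\bar\pi$ there, whereas the paper only exhibits an explicit axis representative along the subsequence $nk_2\cdots k_p$ and must then invoke Fekete's subadditive lemma to know the full limit exists and agrees with the subsequential one; your two-sided sandwich $nL\le\w(n[\gamma]_\sigma)\le nL+C$ makes Fekete unnecessary. Third, your upper bound replaces the paper's explicit maximization of $\|x\|$ over the half-space cut out by its inequality (\ref{eq3}) with the one-line observation $\|\delta\|\le\bar\pi(\delta)$, valid because every coefficient $k_1/k_j$ is at least $1$ under your ordering. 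The only loose ends are trivial edge cases you do not spell out ($\gamma=0$, and $\delta=e_i$ where the contrapositive of Lemma \ref{lt2} cannot be applied to $\delta-e_i=0$ but the claimed inequality holds anyway); neither affects correctness.
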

\begin{proof}

By \cite[Lemma 3.3]{Geroldinger-Hassler}, $\w( a+b)\leq \w(a)+\w(b)$ for all $a,b\in S$. Thus, for every $n,m\in\N$ and every $a\in S$, $\w((n+m) a )\leq n\w(a)+m\w(a)$. Fekete's Subadditive Lemma (see \cite{Fekete}) states that for every subadditive sequence $\{z_n|n=1,\dots,\infty\}$, the limit $\lim_{n\to \infty} \frac {z_n} n$ exists and it is equal to ${\inf} \frac {z_n} n$ or $-\infty$. Since $\w(a)\geq 0$ for every $a\in S\setminus\{0\}$, the limit $\ow(a)=\lim_{n\to\infty} \frac {\w(n a)} n$
always exists for all $a\in S$.

Without loss of generality we can assume that there exist $k_1\geq \dots\geq k_p\in\N\setminus\{0\}$ verifying
$k_1[e_1]_\sigma=\dots=k_p[e_p]_\sigma$.
Then for all $\gamma=(\gamma_1,\dots,\gamma_p)\in\N^p$, it holds that
$k_2\cdots k_p[\gamma]_\sigma=
k_2\cdots k_p\gamma_1[e_1]_\sigma+\dots+k_2\cdots k_p\gamma_p[e_p]_\sigma=
(\gamma_1k_2\cdots k_p +k_1\gamma_2k_3\cdots k_p+\dots+ k_1\dots k_{p-1}\gamma_p)[e_1]_\sigma=
[(\sum_{i=1}^p\frac{k_1\cdots k_p} {k_i} \gamma_i,0,\dots,0)]_\sigma
$.

Let $a=[\gamma]_\sigma\in S$ with $\gamma\in\N^p$.
Denote by $H$ the hyperplane of $\Q^p$ spanned by $\{ k_1 e_1 -k_ie_i|i=2,\dots,p \}$; $H$ is defined by the  equation
$\sum_{i=1}^p\frac{k_1\cdots k_p} {k_i} x_i =0$.
It is straightforward to prove that
$\E([\gamma]_{\sigma})\subseteq H_\gamma$ where
 $H_\gamma$ is the affine subvariety of $\Q^p$ defined by the equation $\sum_{i=1}^p\frac{k_1\cdots k_p} {k_i} x_i =\sum_{i=1}^p\frac{k_1\cdots k_p} {k_i} \gamma_i$.
 Using now that $0 \leq \frac{k_1\cdots k_p} {k_1} \leq\dots\leq \frac{k_1\cdots k_p} {k_p}$
 and that $ \sum_{i=1}^p\frac{k_1\cdots k_p} {k_i} \gamma_i \geq 0$ we deduce that
$\max \{ \sum_{i=1}^p x_i| (x_1,\dots,x_p)\in  H_{\gamma}\cap\Q^p_{\geq} \}$  is achieved by the element
$\frac {1}{ k_2\cdots k_p}(\sum_{i=1}^p\frac{k_1\cdots k_p} {k_i} \gamma_i,0,\dots,0)$.
For the elements of the form $n k_2 \cdots k_p [\gamma]_\sigma$ with $n\in\N\setminus\{0\}$ its  maximum is achieved by the element
$n k_2\cdots k_p \frac {1}{ k_2\cdots k_p}(\sum_{i=1}^p\frac{k_1\cdots k_p} {k_i} \gamma_i,0,\dots,0)=
n(\sum_{i=1}^p\frac{k_1\cdots k_p} {k_i} \gamma_i,0,\dots,0)\in \N^p$, which clearly verifies that
\[nk_2\cdots k_p [\gamma]_\sigma = [n(\sum_{i=1}^p\frac{k_1\cdots k_p} {k_i} \gamma_i,0,\dots,0)]_\sigma.\]
Using that $S$ is cancellative and reduced, it is easy to prove that
$n(\sum_{i=1}^p\frac{k_1\cdots k_p} {k_i} \gamma_i,0,\dots,0)
\in \minimals (\E(nk_2\cdots k_p[\gamma]_\sigma+S))
$.
Thus,
$\w(nk_2\cdots k_p[\gamma]_\sigma)\geq
\parallel n (\sum_{i=1}^p\frac{k_1\cdots k_p} {k_i} \gamma_i,0,\dots,0)\parallel=n \sum_{i=1}^p\frac{k_1\cdots k_p} {k_i} \gamma_i$.
Since the limit $\ow(a)=\lim_{n\to \infty} \frac {\w(n [\gamma]_\sigma) } n$  exists and every subsequence  converges to the same limit. Therefore,
\[
\begin{multlined}
\ow(a)=\lim_{n\to \infty} \frac {\w(n [\gamma]_\sigma) } n=
\lim_{n \to \infty} \frac {\w(n k_2\cdots k_p [\gamma]_\sigma) } {nk_2\cdots k_p}\geq
\lim_{n\to\infty} \frac{n \sum_{i=1}^p\frac{k_1\cdots k_p} {k_i} \gamma_i}{n k_2 \cdots k_p} \\=
\lim_{n \to \infty} \frac {nk_1k_2\cdots k_p(\gamma_1/k_1+\dots+\gamma_p /k_p)}{nk_2\cdots k_p}=
k_1 (\gamma_1/k_1+\dots+\gamma_p /k_p).
\end{multlined}
\]
Obtaining in this way a lower bound for $\ow(a)$.

We now prove that the equality holds. By Lemma \ref{lt2}, every element
$(x_1,\dots,x_p)\in\N^p$ verifying that
$\sum_{i=1}^p\frac{k_1\cdots k_p}{k_i}x_i\geq (p-1)k_1\cdots k_p+\sum_{i=1}^p n \frac{k_1\cdots k_p}{k_i}\gamma_i
$ belongs to $\E(n[\gamma]_\sigma +S)$. Thus,
 every element fulfilling
$\sum_{i=1}^p\frac{k_1\cdots k_p}{k_i}(x_i-1)\geq (p-1)k_1\cdots k_p+\sum_{i=1}^p n \frac{k_1\cdots k_p}{k_i}\gamma_i$
also belongs to $\E(n[\gamma]_\sigma +S)$, but
does not belong to
$\minimals (\E(n[\gamma]_\sigma+S))$. Therefore, the elements of
$\minimals (\E(n[\gamma]_\sigma+S))$ satisfy the inequality
\begin{equation}\label{eq3}
\sum_{i=1}^p\frac{k_1\cdots k_p}{k_i}x_i\leq \sum_{i=1}^p\frac{k_1\cdots k_p}{k_i}+
(p-1)k_1\cdots k_p+\sum_{i=1}^p n \frac{k_1\cdots k_p}{k_i}\gamma_i.
\end{equation}
Hence, the set
$
\minimals (\E(n[\gamma]_\sigma+S))$
is included in
$\left\{ (x_1,\dots,x_p)\in\Q^p_\geq :
(x_1,\dots,x_p) \textrm{ satisfies \textrm{(\ref{eq3})}}
\right\}
$ and for this reason
\begin{equation}\label{eq4}
\w(n[\gamma]_\sigma)
\leq
\sup\{ \|(x_1,\dots,x_p)\|
:
(x_1,\dots,x_p)\in\Q^p_\geq\textrm{ and }
 \textrm{ satisfies \textrm{(\ref{eq3})}}
\}
.
\end{equation}
Using  that the coefficients of the left-hand side of  (\ref{eq3}) verify that
$0 \leq \frac{k_1\cdots k_p} {k_1} \leq\dots\leq \frac{k_1\cdots k_p}{k_p}$, it is straightforward to prove that for every $d\in\Q$
the set
$
\{
(x_1,\dots,x_p)\in\Q^p_\geq :
\sum_{i=1}^p\frac{k_1\cdots k_p}{k_i}x_i=d
\}
$
is nonempty if and only if $d\geq0$. Besides,
the maximum of
$
\{ \|(x_1,\dots,x_p)\|
:
(x_1,\dots,x_p)\in\Q^p_\geq\textrm{ and }
\sum_{i=1}^p\frac{k_1\cdots k_p}{k_i}x_i=d
\}
$
 is achieved by
the element $(d/(k_2\cdots k_p),0,\dots,0)$, it is equal to $d/(k_2\cdots k_p)$, and if we increase $d$, then  it increases.
Since
the  right-hand side of  (\ref{eq3}) is greater than zero, we deduce that
the supreme of (\ref{eq4}) is achieved by an element of $\Q^p_\geq$ fulfilling that equality in (\ref{eq3}) holds and
having all its coordinates equal to zero but the first one.
That point is equal
to $n\xi$ with
\[\xi=\left(
\frac {\sum_{i=1}^p\frac{k_1\cdots k_p}{k_i}} {nk_2\cdots k_p} +
\frac {(p-1)k_1} n +
\frac { \sum_{i=1}^p
\frac{k_1\cdots k_p}{k_i}\gamma_i}
{k_2\cdots k_p}
,0,\dots,0\right)\in\Q_{\geq}^p\]
and it verifies that
$\w(n[\gamma]_\sigma)\leq \|n\xi\|$.
Therefore,
\[
\begin{multlined}
\ow(a)=\lim_{n\to \infty} \frac {\w(n [\gamma]_\sigma) } n
\leq \lim_{n\to \infty} \frac{\|n\xi\|}n\\
=\lim_{n\to \infty}
\frac
{
\frac {\sum_{i=1}^p\frac{k_1\cdots k_p}{k_i}} {k_2\cdots k_p} +
\frac {(p-1)k_1} 1 +
n\frac { \sum_{i=1}^p
\frac{k_1\cdots k_p}{k_i}\gamma_i}
{k_2\cdots k_p}
}
n
=
\frac{\sum_{i=1}^p\frac{k_1\cdots k_p} {k_i} \gamma_i}{k_2 \cdots k_p}
\\=
k_1 (\gamma_1/k_1+\dots+\gamma_p /k_p).
\end{multlined}
\]

\vspace{-22 pt}
\qedhere
\vspace{22 pt}
\end{proof}

\begin{corollary}
Let $S=\N^p/\sigma$ be a quasi-Archimedean cancellative reduced monoid.
There exist $k_1,\dots,k_p\in\N$ such that $\ow([e_i]_\sigma)=\frac { \max\{k_1,\dots,k_p  \}  } {k_i}$ for all $i=1,\dots,p$.
\end{corollary}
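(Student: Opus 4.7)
The plan is to apply Theorem~\ref{teorema:ow} directly to each generator $[e_i]_\sigma$. Recall from Remark~\ref{r:ks} that, since $S$ is quasi-Archimedean and cancellative, there exist positive integers $k_1,\dots,k_p$ satisfying $k_1[e_1]_\sigma=\cdots=k_p[e_p]_\sigma$. After a suitable rearrangement of the generators we may assume $k_{t_1}\geq k_{t_2}\geq\cdots\geq k_{t_p}$, so that $k_{t_1}=\max\{k_1,\dots,k_p\}$; this is exactly the setup used in Theorem~\ref{teorema:ow}.

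Next I would plug the coordinate vector $e_j=(\delta_{1j},\dots,\delta_{pj})$ into the formula
\[
\ow(a)=\gamma_{t_1}+\sum_{i=2}^{p}\frac{k_{t_1}\gamma_{t_i}}{k_{t_i}}
\]
provided by the theorem. For $j=t_1$ only the first summand contributes, giving $\ow([e_{t_1}]_\sigma)=1=k_{t_1}/k_{t_1}$. For $j=t_m$ with $m\geq 2$ only the term corresponding to index $t_m$ in the sum contributes, yielding $\ow([e_{t_m}]_\sigma)=k_{t_1}/k_{t_m}$. In either case $\ow([e_j]_\sigma)=\max\{k_1,\dots,k_p\}/k_j$, which is the required identity.

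There is essentially no obstacle: the argument is a direct evaluation of Theorem~\ref{teorema:ow} at the unit vectors, combined with the elementary observation that quasi-Archimedean cancellative monoids come equipped with the constants $k_i$ of Remark~\ref{r:ks}. The only small point to be careful about is that the statement of the corollary is labelling-invariant (the formula $\max\{k_1,\dots,k_p\}/k_i$ holds for every $i$), so one must note that the rearrangement used to invoke the theorem does not affect the final conclusion.
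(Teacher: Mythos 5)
Your proposal is correct and is exactly the intended argument: the paper states this corollary without proof as an immediate consequence of Theorem~\ref{teorema:ow}, and evaluating the theorem's formula at the unit vectors (with the observation that $k_{t_1}=\max\{k_1,\dots,k_p\}$ and that the conclusion is labelling-invariant) is precisely what is being left to the reader.
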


As we pointed out in the abstract, we give a formula to compute the $\ow$-primality in numerical semigroups.

\begin{corollary}
Let $S$ be a numerical monoid minimally generated by
$\langle s_1<s_2<\dots<s_p\rangle$. For every
$s\in S$,
we have that
$\ow(s)=\frac s {s_1}$.
\end{corollary}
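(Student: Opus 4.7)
The plan is to derive this corollary as a direct specialization of Theorem~\ref{teorema:ow} to the numerical-semigroup setting. First I would note the standard facts that a numerical monoid $S=\langle s_1<\dots<s_p\rangle$ is cancellative (it embeds in $\Z$), reduced (its only unit is $0$), finitely generated, and quasi-Archimedean (since for any $x,y\in S\setminus\{0\}$ one has $yx = xy$, giving the defining equality $px=qy$). Hence the hypotheses of Theorem~\ref{teorema:ow} are satisfied, and I may identify $[e_i]_\sigma$ with $s_i$ via the natural isomorphism $S\cong\N^p/\sigma$.

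Next I would choose explicit constants $k_1,\dots,k_p$ as required in Remark~\ref{r:ks}. Setting $L=\operatorname{lcm}(s_1,\dots,s_p)$ and $k_i=L/s_i$, the identity $k_1 s_1=k_2 s_2=\dots=k_p s_p=L$ holds inside $\N$, and since the semigroup operation is ordinary addition this translates to $k_1[e_1]_\sigma=\dots=k_p[e_p]_\sigma$. Because $s_1<s_2<\dots<s_p$, the inequalities $k_1>k_2>\dots>k_p$ hold, so the canonical ordering demanded in Theorem~\ref{teorema:ow} is achieved with the trivial rearrangement $t_i=i$.

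Finally I would substitute into the formula of Theorem~\ref{teorema:ow}. For any $s\in S$, pick a factorization $s=\gamma_1 s_1+\gamma_2 s_2+\dots+\gamma_p s_p$ with $\gamma_i\in\N$ (which exists because the $s_i$ generate $S$). Theorem~\ref{teorema:ow} yields
\[
\ow(s)=\gamma_1+\sum_{i=2}^{p}\frac{k_1\gamma_i}{k_i}
=\gamma_1+\sum_{i=2}^{p}\frac{(L/s_1)\gamma_i}{L/s_i}
=\gamma_1+\sum_{i=2}^{p}\frac{s_i\gamma_i}{s_1}
=\frac{1}{s_1}\sum_{i=1}^{p}\gamma_i s_i=\frac{s}{s_1}.
\]
Note that the right-hand side depends only on the integer $s$, not on the particular factorization chosen, which is consistent with the well-definedness of $\ow$ on $S$.

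There is essentially no obstacle; the whole content is the bookkeeping of identifying the correct constants $k_i=L/s_i$ and checking that the monotonicity $k_1\ge\dots\ge k_p$ matches the hypothesis of Theorem~\ref{teorema:ow}. The only point worth emphasizing is that one must verify the numerical-monoid hypotheses (quasi-Archimedean, cancellative, reduced) before invoking the theorem, but each is immediate from elementary properties of submonoids of $(\N,+)$.
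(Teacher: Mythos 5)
Your proposal is correct and follows exactly the paper's own argument: the paper's proof likewise consists of writing $s=\sum_{i=1}^p\gamma_i s_i$, taking $k_i=\lcm(s_1,\dots,s_p)/s_i$, and substituting into Theorem~\ref{teorema:ow}. Your write-up simply makes explicit the verification of the hypotheses and the arithmetic that the paper leaves to the reader.
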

\begin{proof}
Use that there exist $\gamma_1,\dots,\gamma_p\in\N$ such that $s=\sum_{i=1}^p \gamma_i s_i$ and
 take $k_i=\frac {\lcm(s_1,\dots,s_p)} {s_i}$.
\end{proof}

\end{document}